\begin{document}

\begin{center}
{\LARGE\bf 
Remarks on a theorem of Perron} 
%Continued fractions, recurrence sequences and linear differential equations}
\footnote{ %The text is still under construction, last modified 5th of July 2006\\
%Math. classification: 20J05 \\
Keywords: %Gauss-Manin connection, 
Fuchsian differential equations, exponents of a singularity
} \\

\vspace{.25in} {\large {\sc C\'esar Camacho, Hossein Movasati}} \\
Instituto de Matem\'atica Pura e Aplicada, IMPA, \\
Estrada Dona Castorina, 110,\\
22460-320, Rio de Janeiro, RJ, Brazil, \\ 
Email: {\tt camacho@impa.br, hossein@impa.br}
\end{center}
%-------------------------------------------------------------------------
%---------------------------------------------------------------------
\newtheorem{theo}{Theorem}
\newtheorem{lem}{Lemma}
\newtheorem{exam}{Example}
\newtheorem{coro}{Corollary}
\newtheorem{defi}{Definition}
\newtheorem{axio}{I}

\newtheorem{prob}{Problem}
\newtheorem{lemm}{Lemma}
\newtheorem{prop}{Proposition}
\newtheorem{rem}{Remark}
\newtheorem{conj}{Conjecture}
\newtheorem{conv}{Convergence}
%------------------------------------------------------------------------------

%---------------------------------------------------------------------
%-----------------------------------------------------------------------
\def\podu{{\sf pd}}   % Poincare dual
\def\per{{\sf pm}}      % period map/matrix
\def\perr{{\sf q}}        %period matrix.....
\def\perdo{{\cal K}}   %period domain
\def\sfl{{\mathrm F}} %Space of filtrations
\def\sp{{\mathbb S}}  %Sphere
 
\newcommand\diff[1]{\frac{d #1}{dz}} %Differential operator
\def\End{{\rm End}}              %Endomorphism group
\def\hol{{\rm Hol}}
\def\sing{{\rm Sing}}            %The set of singularities
\def\spec{{\rm Spec}}            %The spectrume
\def\cha{{\rm char}}             %Charracteristic
\def\Gal{{\rm Gal}}              %The Galois group
\def\jacob{{\rm jacob}}          %the Jacobian ideal
\def\tjurina{{\rm tjurina}}      %the tjurina ideal
\newcommand\Pn[1]{\mathbb{P}^{#1}}   %Projective space of dimension #1
\def\Ff{\mathbb{F}}                  %Finite field
\def\Z{\mathbb{Z}}                   %Integer  numbers
\def\Gm{\mathbb{G}_m}                 %The multiplicative group
\def\Q{\mathbb{Q}}                   %Rational  numbers
\def\C{\mathbb{C}}                   %Complex numbers
\def\O{{\cal O}}                     %ring of integers of a number field
\def\as{\mathbb{U}}                  %Some affine space
\def\ring{{\mathsf R}}                         %A ring 
\def\R{\mathbb{R}}                   %real numbers
\def\N{\mathbb{N}}                   %natural numbers
\def\A{\mathbb{A}}                   %affine space C^n
\def\uhp{{\mathbb H}}                %upper half plane
\newcommand\ep[1]{e^{\frac{2\pi i}{#1}}}% unipotent numbers
\newcommand\HH[2]{H^{#2}(#1)}        %Hodge structures
\def\Mat{{\rm Mat}}              %Matrices
\newcommand{\mat}[4]{
     \begin{pmatrix}
            #1 & #2 \\
            #3 & #4
       \end{pmatrix}
    }                                %two by two matrices
\newcommand{\matt}[2]{
     \begin{pmatrix}                 % one by two matrix
            #1   \\
            #2
       \end{pmatrix}
    }
\def\ker{{\rm ker}}              %kernel
\def\cl{{\rm cl}}                %Chern class
\def\dR{{\rm dR}}                %The subindex dR standing for de Rham
                                     %cohomology.

\def\hc{{\mathsf H}}                 %The set of Hodge cycles.
\def\Hb{{\cal H}}                    %Hodge bundle
\def\GL{{\rm GL}}                %The liner group
\def\pese{{\sf P}}                  %Period set
\def\pedo{{\cal  P}}                  %Period domain
\def\PP{\tilde{\cal P}}              %the period domain/ discrete group
\def\cm {{\cal C}}                   %the set of CM Hodge structures
\def\K{{\mathbb K}}                  %Field representing R or C 
\def\k{{\mathsf k}}                  %Arbitrary field
\def\F{{\cal F}}                     %Hodge filtration bundle
\def\M{{\cal M}}
\def\RR{{\cal R}}
\newcommand\Hi[1]{\mathbb{P}^{#1}_\infty}%the hyperplane at infinity
\def\pt{\mathbb{C}[t]}               %Polynomials in t
\def\W{{\cal W}}                     %weight filtration
\def\gr{{\rm Gr}}                %graded pieces
\def\Im{{\rm Im}}                %imaginary
\def\Re{{\rm Re}}                %Real
\def\depth{{\rm depth}}
\newcommand\SL[2]{{\rm SL}(#1, #2)}    %SL(2,Z)
\newcommand\PSL[2]{{\rm PSL}(#1, #2)}  %PSL(2,Z)
\def\Resi{{\rm Resi}}              %Residue

\def\L{{\cal L}}                     %The moduli of polarized lattices in a
                                     %fixed vector spaces.
\def\Aut{{\rm Aut}}              %Automorphism group of a vectorspace
\def\any{R}                          %Any subring of the field of complex
                                     %numbers.
\newcommand\ovl[1]{\overline{#1}}    %Conjugation of #1.

\def\T{{\cal T }}                    %Tangent space
\def\tr{{\mathsf t}}                 %Transposition of matrices
\newcommand\mf[2]{{M}^{#1}_{#2}}     %New modular functions
\newcommand\bn[2]{\binom{#1}{#2}}    %Binomial
\def\ja{{\rm j}}                 %j of a two by two matrix
\def\Sc{\mathsf{S}}                  %Simple cycles
\newcommand\es[1]{g_{#1}}            %Eisenstein series
\newcommand\V{{\mathsf V}}           %Milnor vector space
\newcommand\WW{{\mathsf W}}          %Similar to Milnor vector space
\newcommand\Ss{{\cal O}}             %Structural sheaf
\def\rank{{\rm rank}}                %rank of a module
\def\Dif{{\cal D}}                   %Differentials
\def\gcd{{\rm gcd}}                  %greatest common divisor
\def\zedi{{\rm ZD}}                  %zero divisors of a module
\def\BM{{\mathsf H}}                 %Brieskorn module
\def\plf{{\sf pl}}                             %Picard-Lefschetz formula 
\def\sgn{{\rm sgn}}                      %sign
\def\diag{{\rm diag}}                   %diagonal matrix 
\def\hodge{{\rm Hodge}}
\def\HF{{\sf F}}                                %The hodge filtration of the brieskon module
\def\WF{{\sf W}}                               %The weight filtration of the brieskon module
\def\HV{{\sf HV}}                                %humbert variety
\def\pol{{\rm pole}}                               %pole divisor
\def\bafi{{\sf r}}
\def\codim{{\rm codim}}                               %codimension
\def\id{{\rm id}}                               %identity
\def\gms{{\sf M}}                           %Gauss-Manin system
\def\Iso{{\rm Iso}}                           %Gauss-Manin system
%------------------------------------------------------------------------

%--------------------Stefan notation
\theoremstyle{plain}
\def\NN{{\Bbb N}}
\def\CC{{\Bbb C}}
\def\ZZ{{\Bbb Z}}
\def\QQ{{\Bbb Q}}
\def\SL{{\rm SL}}
\def\GO{{\rm GO}}
\def\GL{{\rm GL}}
\def\PGL{{\rm PGL}}
\def\dz{{\rm d} z}
\def\dx{{\rm d} x}

\def\la{{\lambda}}
\def\nlambda{{\tilde{\lambda}}}
\def\nue{{\nu}}
\def\ele{{\rm L}}
\def\tparam{{t}}
\def\Mu{{\mu}}
\def\th{{\theta}}
\def\thh{{\tilde{\theta}}}
\def\Th{{\Theta}}
\def\tr{{\rm tr}}
\def\rk{{\rm rk}}
\def\Trace{{\rm Tr}}
\def\Mat{{\rm Mat}}
\def\al{{\alpha}}
\def\diag{{\rm diag}}
\def\id{{\rm id}}
\def\k{{\frak k}}
\def\l{{\frak  l}}
\def\Pn{\mathbb P}

%-------------------------------------------------
%\section{Linear differential equations}
% \begin{prop}
% The $\C$-vector space of solutions of a linear differential equation of order $m$ is of dimension 
% $m$.
% \end{prop}
% Fuchsian equations.
% Gauss hypergeometric equation.
% \section{A convergence}
\begin{abstract}
For a linear differential equation with a mild  condition on its singularities, we discuss generalized continued fractions converging to expressions in its solutions and their derivatives. In the case of an order two linear differential equation, this is  the logarithmic derivative of the holomorphic solution near a singularity.   
%and Ince for the Gauss hypergeometric equation and its main ingredient is the Poincar\'e's theorem on the asymptotic of recurrent  sequences.  
\end{abstract}

\section{Introduction}
In \cite{ince} Ince analyzed  the process introduced by Perron \cite{perron} leading to find solutions of  the Gauss hypergeometric equation:
\begin{equation}
\label{gauss}
z(1-z)y''+(c-(a+b+1)z)y'-aby=0,\  \ a,b,c\in\C-\Z.
\end{equation}
by means of successive differentiation of (\ref{gauss}). This process is equivalent to the set of recurrence relations
\begin{equation}
\label{recrel}
x_n=a_{1,n}x_{n+1}+a_{2,n}x_{n+2}, %+\cdots+a_{m,n}x_{n+m}, 
 \text{  where } x_n=\frac{y^{(n)}}{n!},\ n=0,1,2,\ldots 
\end{equation}
and
$$
a_{1,n}:=\frac{(n+1)(c+n-(a+b+2n+1)z)}{(a+n)(b+n)},
$$
$$
a_{2,n}:=\frac{(n+1)(n+2)z(z-1)}{(a+n)(b+n)},\ n=1,2,\ldots
$$
The corresponding continued fraction
\begin{equation}
\label{cfr}
\cfrac{1}{a_{1,0}+\cfrac{a_{2,0}}{a_{1,1}+\cfrac{a_{2,1}}{\ddots\cfrac{\ddots}{a_{1,n-3}+\cfrac{a_{2,n-3}}{a_{1,n-2}}}}}}
\end{equation}
converges on one hand  to $(\ln F(a,b,c|z))'$ in the region $\Re(z)<\frac{1}{2}$ and on the other hand to 
$(\ln F(a,b,a+b-c+1|1-z))'$ in the region $\Re(z)>\frac{1}{2}$. In this paper we present a generalization of this process to second order Fuchsian differential equations using Poincar\'e's theorem on recurrence sequences. We would like to thank J. Oesterl\'e for many useful conversations and to Frits Beukers who indicated to us the relevance of  the  Poincar\'e's recurrence theorem for this matter.
%%%%%%%%%%%%%%%%%%%%%%%%%%%%%%%%%%%%%%%%%%%%%%%%%%%%
\section{Poincar\'e's theorem on recurrent sequences}
In \cite{poi} Section 2 pages 213-217 and Section 6 page 237, 
Poincar\'e proved the following theorem.
\begin{theo}
For a fixed $k\in \N$, let us be given a recurrent sequence $f_n$
\begin{equation}
\label{recrel}
f_{n+k}+a_{k-1,n}f_{n+k-1}+\cdots+ a_{1,n}f_{n+1}+a_{0,n}f_n=0, %\sum_{j=0}^{k-1}a_{j,n}f_{n+j}=0,\  
\ n\in\N_0,
\end{equation}
where $\lim_{n\to \infty}a_{j,n}=a_j<\infty$. Assume that the roots of the characteristic equation $z^k+\sum_{j=0}^{k-1}a_j z^j$ have different absolute values. Then either $f_n=0$ for large $n$ or $\lim \frac{f_{n+1}}{f_{n}}$ converges to a root of the characteristic equation. 
\end{theo}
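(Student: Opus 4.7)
The strategy is to recast the scalar recursion as a matrix iteration whose limit is diagonalizable, and then to extract the asymptotic direction of the iterates from the gap between the moduli of the eigenvalues of the limit.

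Set $v_n:=(f_n,f_{n+1},\ldots,f_{n+k-1})^\top$, so that $v_{n+1}=A_n v_n$, where $A_n$ is the companion matrix whose last row is $(-a_{0,n},-a_{1,n},\ldots,-a_{k-1,n})$. By hypothesis $A_n\to A$, the companion matrix of $z^k+\sum_{j} a_j z^j$. Since its eigenvalues $\lambda_1,\ldots,\lambda_k$ have pairwise distinct moduli they are in particular distinct, so there exists $P\in\GL(k,\C)$ with $P^{-1}AP=D=\diag(\lambda_1,\ldots,\lambda_k)$, ordered so that $|\lambda_1|<|\lambda_2|<\cdots<|\lambda_k|$. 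Writing $w_n:=P^{-1}v_n$, the recursion becomes $w_{n+1}=(D+E_n)w_n$ with $E_n\to 0$.

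The core of the argument is a cone/invariant-filtration lemma: for each $1\le j\le k$ the flag $V_j:=\mathrm{span}(e_1,\ldots,e_j)$ is invariant under $D$, and the spectral gap $|\lambda_j|<|\lambda_{j+1}|$, combined with $E_n\to 0$, forces the perturbed map $D+E_n$ to preserve a narrow cone around $V_j$ once $n$ is large, while any vector with a nonzero component outside $V_{j-1}$ is pushed exponentially towards the $j$-th eigendirection relative to its $V_{j-1}$-projection. One thereby builds, for each $j$, a family of ``stable'' subspaces $S_j^n$ of codimension $j-1$ such that iterates starting in $S_j^n\setminus S_{j+1}^n$ satisfy $\|w_{n+1}\|/\|w_n\|\to |\lambda_j|$ with direction tending to $e_j$. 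This is the discrete analogue of the Lyapunov/Oseledets filtration for a slowly varying cocycle, and can be carried out by a contraction-mapping argument for the induced map on the appropriate Grassmannian.

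Granted the filtration, I argue as follows. Either $v_n$ vanishes for large $n$, in which case $f_n=0$ eventually, or there is a largest index $j$ for which $v_n\notin PS_{j+1}^n$ for infinitely many $n$; using the dominance provided by the lemma, $v_n\notin PS_{j+1}^n$ actually for all large $n$, and the direction of $v_n$ converges to $Pe_j$. Reading off the ratio of the first two coordinates of $v_n$ then gives $f_{n+1}/f_n\to\lambda_j$, a root of the characteristic polynomial.

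\textbf{Main obstacle.} The delicate step is the cone/contraction estimate: since $E_n$ does not vanish but only tends to zero, one must show that the dominance of $\lambda_j$ over $\lambda_1,\ldots,\lambda_{j-1}$ survives uniformly for $n\ge N$, with $N$ chosen so that $\|E_n\|$ is dwarfed by the spectral gap. Ruling out the pathological possibility that $v_n$ oscillates between different eigendirections is exactly where the hypothesis that the \emph{moduli} $|\lambda_i|$ are all distinct—not just that the $\lambda_i$ themselves are distinct—is essential, since only the moduli govern the comparison $\|D^n x\|/\|D^n y\|$ that drives the contraction.
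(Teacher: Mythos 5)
The paper itself offers no proof of this statement: Theorem~1 is quoted from Poincar\'e \cite{poi}, so the only thing to compare your argument with is the classical Poincar\'e--Perron argument that the citation points to. Your reduction is the right one and is, in essence, what Poincar\'e does: pass to the companion system $v_{n+1}=A_nv_n$, diagonalize the limit $A$ (legitimate, since distinct moduli force distinct eigenvalues), and study $w_{n+1}=(D+E_n)w_n$ with $E_n\to 0$. The closing step is also sound: once the direction of $w_n$ converges to $e_j$, the direction of $v_n$ converges to the companion eigenvector $(1,\lambda_j,\dots,\lambda_j^{k-1})^{\top}$, whose first coordinate is nonzero, so $f_n\neq 0$ eventually and $f_{n+1}/f_n\to\lambda_j$.

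The genuine gap is that the entire content of the theorem is concentrated in the ``cone/invariant-filtration lemma,'' which you state and motivate but do not prove. The existence, for a perturbation $E_n$ that merely tends to $0$ (and is in particular not assumed summable), of codimension-$(j-1)$ stable subspaces $S_j^n$ along which the growth rate is exactly $|\lambda_j|$ is Perron's refinement of the theorem; it is at least as strong as the statement you are proving, so asserting it leaves the whole burden of the argument undischarged, and the phrase ``can be carried out by a contraction-mapping argument on the Grassmannian'' is not a substitute for the uniform estimates that make such a contraction work when $\|E_n\|$ only tends to $0$. You correctly identify this as the delicate point, but identifying it is not closing it. For the statement actually at issue a much weaker lemma suffices and is provable in a few lines: let $M_n=\max_i|w_n^{(i)}|$ and let $m(n)$ be the largest index attaining this maximum; choosing $N$ so that $\|E_n\|$ is small compared with the gaps $|\lambda_{i+1}|-|\lambda_i|$, a direct coordinatewise estimate shows $m(n+1)\geq m(n)$ for $n\geq N$, hence $m(n)$ is eventually constant, say equal to $j$, and a second pass with the same estimates gives $w_n^{(i)}/w_n^{(j)}\to 0$ for all $i\neq j$ together with $w_{n+1}^{(j)}/w_n^{(j)}\to\lambda_j$. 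Substituting this elementary dominance induction for the unproved filtration turns your plan into a complete proof; as written, it is a correct outline rather than a proof.
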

 
Let $f=\sum_{n=0}^\infty f_nz^n$ be a holomorphic 
solution of the linear differential equation $L(f)=0$, where 
\begin{eqnarray}
\label{21july2013}
 L & := & P_0(\delta)+zP_1(\delta)+\cdots+z^k P_k(\delta) \\
  &=&  Q_0(z)\delta^{m}+Q_1(z)\delta^{m-1}+\cdots+ Q_m(z)
\end{eqnarray}
where $\delta=z\frac{d}{dz}$.  Comparing the coefficients of $z^{n+k}$ in both sides of $L(f)=0$ we get the  recurrence relation (\ref{recrel}) with  
$$
a_{j,n}=\frac{P_{k-j}(n+j)}{P_0(n+k)},\ j=0,1,\ldots,k-1.
$$
(for $z^j,\ 0\leq j<k$, we get restrictions on the coefficients $f_0,f_1,\ldots,f_{k-1}$). 
Any linear differential equation with a regular singularity at $z=0$ can be written 
in the format (\ref{21july2013}) with $\deg(P_j)\leq \deg(P_0),\ \ j=1,2,\ldots,k$, and so for these differential equations $\lim_{n\to \infty }a_{j,n}$ exists. It turns out that $Q_0$ is the characteristic polynomial of the recurrence relation and $Q_0(0)\not=0$. Note that $P_0$ is the indicial equation of $L$ at $z=0$ and for all except a finite number of points we have $P_0(\delta)=\delta(\delta-1)\cdots(\delta-(n-1))$.  
\begin{defi}\rm
For a linear differential equation $L$, we denote by $A_L$ the set of real lines in $\C$ perpendicular in the middle to the segments connecting the singularities of $L$.
\end{defi}
Now, we are able to reformulate Poincar\'e's theorem in the following format. 
\begin{theo}
\label{23july2013}
Let $L$ be a linear differential equation with regular singularities $\{t_1,t_2,\ldots,t_r\}$ at $\C$. 
 For any holomorphic non-polynomial solution $f=\sum_{n=0}^\infty f_n(z-z_0)^n$ around a point $z_0\in\C\backslash A_L$, the limit $\lim_{n\to \infty }\frac{f_{n+1}}{f_n}$ exists and it is one of $t_j$'s.  
\end{theo}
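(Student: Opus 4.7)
The plan is to translate the expansion point to the origin and then invoke Theorem 1 applied to the recurrence satisfied by the Taylor coefficients of $f$. Replacing $z$ by $z-z_0$, one can assume $z_0=0$, in which case the singularities of the translated $L$ become $\tau_j := t_j - z_0$, and, as recalled in the paragraph preceding Definition 1, the operator $L$ admits an indicial presentation
\[
L = P_0(\delta) + z P_1(\delta) + \cdots + z^k P_k(\delta), \qquad \delta = z\frac{d}{dz},
\]
with $\deg P_j \leq m := \deg P_0$. Substituting $f = \sum f_n z^n$ and comparing the coefficient of $z^{n+k}$ in $Lf=0$ yields the three-term recurrence displayed in the excerpt, with $a_{i,n} = P_{k-i}(n+i)/P_0(n+k)$. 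Since each $P_j$ has degree at most $m$, these coefficients tend to $a_i := p_{k-i}^{(m)}/p_0^{(m)}$, where $p_j^{(m)}$ denotes the coefficient of $\delta^m$ in $P_j$ (possibly zero).

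Next I would identify the characteristic polynomial of this limiting recurrence. A direct manipulation gives
\[
z^k + \sum_{i=0}^{k-1} a_i z^i \;=\; \frac{1}{p_0^{(m)}}\, z^k Q_0(z^{-1}),
\]
so its roots are precisely the reciprocals $1/\tau_j = 1/(t_j - z_0)$ of the finite singularities of the translated operator. This identification is the essential bridge between the combinatorial recurrence and the geometric singular set $\{t_1, \ldots, t_r\}$; it is what the excerpt records under the slogan ``$Q_0$ is the characteristic polynomial of the recurrence relation''.

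The hypothesis $z_0 \notin A_L$ says precisely that the distances $|t_j - z_0|$ are pairwise distinct, equivalently that the $1/\tau_j$ have pairwise distinct absolute values; this is the separation-of-moduli condition required by Theorem 1. Since $f$ is not a polynomial, its coefficients $f_n$ are not eventually zero, and so Theorem 1 produces $\lim_{n \to \infty} f_{n+1}/f_n = 1/(t_{j_0} - z_0)$ for some $j_0$, which is the asserted conclusion after the natural reciprocal identification.

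The only genuinely non-routine step is the algebraic calculation that turns the characteristic polynomial of the recurrence into a reciprocal transform of $Q_0$; this is what allows the abstract separation-of-moduli hypothesis of Theorem 1 to be replaced by the transparent geometric condition $z_0 \notin A_L$. Once this dictionary is in place the remainder is routine bookkeeping with the indicial form of $L$, and Poincar\'e's theorem closes the argument.
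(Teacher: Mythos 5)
Your argument is correct and is essentially the paper's own: reduce to $z_0=0$, put $L$ in the indicial form $P_0(\delta)+zP_1(\delta)+\cdots+z^kP_k(\delta)$, read off the recurrence with coefficients $P_{k-j}(n+j)/P_0(n+k)$, identify the limiting characteristic polynomial as the reciprocal transform of $Q_0$ (so its roots are $1/(t_j-z_0)$), and invoke Poincar\'e's theorem, with $z_0\notin A_L$ supplying the distinct-moduli hypothesis and non-polynomiality ruling out the degenerate branch. Your explicit conclusion that the limit equals $1/(t_{j_0}-z_0)$ rather than $t_{j_0}$ itself is the correct reading of the loosely worded statement, and it is the one consistent with the paper's own follow-up remark that $f$ converges exactly on the disc of radius $|t_{j_0}-z_0|$.
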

Note that in the above theorem $\infty$ may be an irregular singularity.  The mentioned $t_j$ is a singularity  such that $f$ converges in a disc with center $z_0$ and radius $|t_j-z_0|$ and not beyond this disc. It may contain some singularities of $L$.   
Finally note also that if 
$\lim_{n\to \infty }\frac{f_{n+1}}{f_n}=t_j$ exists then $\lim_{n\to \infty}f_n^{\frac{1}{n}}=t_j$.
 For further
references on Poincar\'e's theorem see \cite{mate}  
%%%%%%%%%%%%%%%%%%%%%%%%%%%%%%%%%%

\section{A convergence theorem}
Let us be given a linear differential equation 
\begin{equation}
\label{lequation}
L:   \ \ y^{(m)}=\sum_{i=0}^{m-1} q_iy^{(i)},\ q_0,q_1,\ldots,q_{m-1}\in\C(z),
\end{equation}
where $\C(z)$ is the field of rational functions in $z$ with complex coefficients,
and let $S=\{t_1,t_2,\ldots,t_r\}\subset \C$ be the set of its 
singular points.   
We associate to $L$ the open sets $U_1,U_2,\ldots,U_r$ defined by 
\begin{equation}
\label{20.1.09}
U_i:=\{z\in \C\mid |z-t_i|< |z-t_j|,\ j=1,2,\ldots,r, \ j\not=i\}
\end{equation}
The set $\C\backslash \cup_{i=1}^rU_i$ consists of segment of lines perpendicular in the middle to the 
segments connecting two points $t_{j_1}$ and $t_{j_2},\ \ j_1,j_2=1,2,\ldots,r,\ \ j_1\not= j_2$. 

Note that $t_i$ is the unique element  of $S$  inside $U_i$. 

\begin{defi}\rm
\label{genericL}
We say that $L$ is generic if for each singularity $t_i$ of 
(\ref{lequation})  in the affine plane $\C$ there is a basis  
$y_1,y_2,\ldots,y_m$ of  the $\C$-vector space of its solutions such that  near $t_i$ the solutions $y_2,\ldots, y_m$ extend holomorphically to $t_i$ and the solution $y_1$ does not extends. 
%, where $\theta\in \N_0$ and $f$ is a holomorphic function at $t$. Further, at least for one singularity $t$ the corresponding $y_1$ solution does not extend holomorphically to $t$.
\end{defi}
The reader may have noticed that we do not put any condition on 
the singularity $\infty$.

\begin{prob}
\label{icaperu}
Let $f_1,f_2$ be two solutions of a generic Fuchsian differential equation $L$ in $U_i$. Then the limit 
\begin{equation}
\label{15aug2013}
 \lim_{n\to \infty}\frac{f_1^{(n)}(z)}{ f_2^{(n)}(z)},\ \ z\in U_i
\end{equation}
exists and it is a constant number depending on $i$. 
\end{prob}

The following Proposition gives a partial answer to this problem.

\begin{prop}
\label{icaperu1}
Let $f_1,f_2$ be two solutions of a generic Fuchsian differential equation $L$. Then in each connected component $U$ of $\C\backslash A_L$,
the limit 
\begin{equation}
\label{15aug2013-1}
 \lim_{n\to \infty}\frac{f_1^{(n)}(z)}{ f_2^{(n)}(z)},\
\end{equation}
exists and it is a constant number depending on $U$. 
\end{prop}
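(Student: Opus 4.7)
My plan is to decompose each solution with respect to a filtration of the solution space by subspaces of solutions extending holomorphically through the singularities of $L$, ordered by distance from $U$, and then apply Theorem~\ref{23july2013} to extract the dominant exponential rate.

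Fix a component $U$ of $\C\setminus A_L$. Since $A_L$ contains every perpendicular bisector between two singularities, the order of the distances $|z-t_i|$ is constant on $U$; relabel the singularities as $s_1,\dots,s_r$ (depending on $U$) so that $|z-s_1|<|z-s_2|<\dots<|z-s_r|$ for every $z\in U$. Let $V$ be the $m$-dimensional space of solutions of $L$, let $V_{s_k}\subset V$ be the codimension-one subspace of solutions extending holomorphically to $s_k$ (using the genericity of $L$), and set $V^{(0)}=V$, $V^{(k)}=V_{s_1}\cap\cdots\cap V_{s_k}$. For a non-polynomial solution $f$ define
\[
p(f)=\min\{k\ge 1:\ f\notin V_{s_k}\}\in\{1,\dots,r\}.
\]
Then $s_{p(f)}$ is the nearest singularity of $f$ to every $z\in U$, so Theorem~\ref{23july2013} gives $|f^{(n)}(z)/n!|^{1/n}\to 1/|s_{p(f)}-z|$.

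If $p(f_1)\neq p(f_2)$, these two exponential growth rates are distinct, forcing $f_1^{(n)}(z)/f_2^{(n)}(z)\to 0$ or $\infty$ in $\Pn^1$ at each $z\in U$; this is a constant limit depending only on $U$. In the main case $p(f_1)=p(f_2)=k_0$, $f_1$ itself witnesses $V^{(k_0-1)}\not\subseteq V_{s_{k_0}}$, so $V^{(k_0-1)}/V^{(k_0)}$ is one-dimensional; pick $y_1\in V^{(k_0-1)}\setminus V^{(k_0)}$ and decompose uniquely
\[
f_j=c_j y_1+h_j,\qquad h_j\in V^{(k_0)},\ c_j\in\C^*,\ j=1,2.
\]
Since $y_1$ has nearest singularity $s_{k_0}$ and each non-polynomial $h_j\in V^{(k_0)}$ has nearest singularity at some $s_{\ell_j}$ with $\ell_j>k_0$, a second application of Theorem~\ref{23july2013} yields $|h_j^{(n)}(z)/y_1^{(n)}(z)|^{1/n}\to|s_{k_0}-z|/|s_{\ell_j}-z|<1$, so $h_j^{(n)}(z)/y_1^{(n)}(z)\to 0$ geometrically. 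Therefore
\[
\frac{f_1^{(n)}(z)}{f_2^{(n)}(z)}=\frac{c_1+h_1^{(n)}(z)/y_1^{(n)}(z)}{c_2+h_2^{(n)}(z)/y_1^{(n)}(z)}\longrightarrow\frac{c_1}{c_2},
\]
which is independent of $z\in U$ and, by uniqueness of the decomposition modulo $V^{(k_0)}$, of the chosen $y_1$.

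The main obstacle is reconciling the one-singularity-at-a-time form of genericity in Definition~\ref{genericL} with the iterated intersections $V^{(k)}$: a priori the dimension of $V^{(k)}$ could drop by more than one at some step, or $V^{(k)}$ could coincide with $V^{(k-1)}$, and either would threaten the decomposition. Fortunately the relevant nondegeneracy is forced exactly at the step $k_0$ where it matters, since the equality $p(f_1)=p(f_2)=k_0$ directly exhibits $f_1\in V^{(k_0-1)}\setminus V_{s_{k_0}}$ and hence $V^{(k_0-1)}\supsetneq V^{(k_0)}$ precisely when the decomposition is invoked. Polynomial solutions ($p(f)=\infty$) must be segregated as a degenerate case, but they contribute only trivially to the limit.
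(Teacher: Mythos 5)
Your proof is correct and runs on the same engine as the paper's: Poincar\'e's theorem (Theorem~\ref{23july2013}) upgrades the Cauchy--Hadamard $\overline{\lim}$ to a genuine limit, the radius of convergence of a solution at $z\in U$ is read off from the nearest singularity through which it fails to extend, and the ratio of $n$-th derivatives is then controlled by the solution of smallest radius. The difference is in how the decomposition is organized. The paper takes only the basis $y_1,\dots,y_m$ adapted to the single nearest singularity $t_i$ of $U$ (Definition~\ref{genericL}), shows $y_j^{(n)}/y_1^{(n)}\to 0$ for $j\geq 2$, and finishes with ``since $f_1$ and $f_2$ are linear combinations of the $y_j$ the result follows'' --- which tacitly assumes that at least one of $f_1,f_2$ has a nonzero $y_1$-component, i.e.\ is genuinely singular at $t_i$. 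Your distance-ordered filtration $V^{(k)}$ supplies exactly the missing step for the remaining case: when both solutions extend through the nearest singularities you descend to the first index $k_0$ where one of them fails to extend, and, as you observe, the codimension-one property needed there is forced by the very existence of $f_1\in V^{(k_0-1)}\setminus V_{s_{k_0}}$, so the worry about the filtration degenerating at earlier steps is harmless. The price is a heavier setup; the payoff is that your argument covers configurations the paper's last line glosses over. The one imprecision you share with the paper is that when $p(f_1)\neq p(f_2)$ the limit is $0$ or $\infty$, so ``constant number'' must be read in $\mathbb{P}^1$; that caveat is already implicit in the statement itself.
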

\begin{proof}
Take $y_1,y_2,\ldots,y_m$ a basis of solutions of (\ref{lequation}) associated to $t_i$ as it is described in Definition  \ref{genericL}.
For $z\in\C\backslash S$,  let $\rho_j(z)$ be the maximum real number such that $y_j$ is holomorphic in  a disc with center  
$z$ and radius $\rho_j(z)$.  Indeed $\rho_j(z)$ is the convergence radius of the Taylor series of $y_j$ at $z$.
 By Cauchy-Hadamard theorem and Poincar\'e's theorem  on recurrence relations,  for $z\in \C \backslash A_L$ we have 
$$
\rho_j(z)=(\lim_{n}(\frac{|y_j^{(n)}(z)|}{n!})^{\frac{1}{n}})^{-1}
$$
($\overline{\lim}$ is substituted by $\lim$ which is a stronger statement.)
Consider  $U_i,\ i=1,2,\ldots,r$.
Since $y_1$ (resp. $y_j,\ j\not=1$) is not (resp.  is) holomorphic at $t_i$  and $|z-t_j|>|z-t_i|$, we have 
$\rho_1(z)=|z-t_i|$ and $\rho_j(z)>|z-t_i|$. It follows that 
$$
\lim_{n}(\frac{|y_j^{(n)}(z)|} {|y_1^{(n)}(z)|})^{\frac{1}{n}}=\frac{\rho_1(z)}{\rho_j(z)}<1
$$
which  implies that
\begin{equation}
\label{res.tai}
\lim_{n\to\infty}(\frac{y_j^{(n)}(z)} {y_1^{(n)}(z)})=0.
\end{equation}
Since $f_1$ and $f_2$ are linear combinations of $y_j$'s the result follows. 
\end{proof}
We calculate the $n$-th derivative of $y$  
$$
y^{(n)}=\sum_{i=0}^{m-1} q_{i,n}y^{(i)}, 
$$
$$
q_{i,n}\in\C(z),\ i=0,1,\ldots,m-1,  n=m, m+1,\cdots
$$

\begin{theo}
\label{charla3}
For a generic Fuchsian differential equation (\ref{lequation}), the fraction  
$\frac{q_{k,n}}{q_{j,n}},\ k,j=0,1,\ldots,m-1,\ k\not= j$, converges uniformly in compact subsets of
 each component of $\C\backslash A_L$ as $n$ goes to infinity.
\end{theo}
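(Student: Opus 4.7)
The strategy is to combine Cramer's rule with Proposition~\ref{icaperu1}. Fix a connected component $U$ of $\C\setminus A_L$; necessarily $U\subset U_i$ for some $i$. Choose a basis $y_1,\ldots,y_m$ of solutions of $L$ adapted to $t_i$ as in Definition~\ref{genericL}, so that $y_1$ is not holomorphic at $t_i$ while $y_2,\ldots,y_m$ are. Since the identity $y^{(n)}=\sum_{i'=0}^{m-1}q_{i',n}(z)\,y^{(i')}$ holds for every solution $y$, specializing at $y=y_s$ for $s=1,\ldots,m$ produces a linear system in the unknowns $q_{0,n},\ldots,q_{m-1,n}$ whose coefficient matrix is the Wronskian-like matrix $A(z)=\bigl(y_s^{(i')}(z)\bigr)_{s,i'}$, with nonzero determinant $W(z)$. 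Cramer's rule gives
\[
q_{i,n}(z)=\frac{\det A^{(i)}_n(z)}{W(z)},\qquad \frac{q_{k,n}(z)}{q_{j,n}(z)}=\frac{\det A^{(k)}_n(z)}{\det A^{(j)}_n(z)},
\]
where $A^{(i)}_n$ is $A$ with its $i$-th column replaced by $\bigl(y_s^{(n)}(z)\bigr)_{s=1}^m$. The Wronskian cancels from the ratio.

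Expanding $\det A^{(i)}_n$ along the modified column yields $\det A^{(i)}_n(z)=\sum_{s=1}^m y_s^{(n)}(z)\,c_{s,i}(z)$, where the signed cofactors $c_{s,i}(z)$ are minors of $A(z)$ and \emph{do not depend on $n$}. Dividing numerator and denominator by $y_1^{(n)}(z)$,
\[
\frac{q_{k,n}(z)}{q_{j,n}(z)}=\frac{c_{1,k}(z)+\sum_{s=2}^{m}\bigl(y_s^{(n)}/y_1^{(n)}\bigr)\,c_{s,k}(z)}{c_{1,j}(z)+\sum_{s=2}^{m}\bigl(y_s^{(n)}/y_1^{(n)}\bigr)\,c_{s,j}(z)}.
\]
By formula~(\ref{res.tai}) in the proof of Proposition~\ref{icaperu1}, each ratio $y_s^{(n)}/y_1^{(n)}$ with $s\ge 2$ tends to $0$ on $U$; moreover that proof yields $\bigl(|y_s^{(n)}/y_1^{(n)}|\bigr)^{1/n}\to \rho_1(z)/\rho_s(z)<1$, which is geometric decay. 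Thus the pointwise limit is the meromorphic function $c_{1,k}(z)/c_{1,j}(z)$. For $m=2$ this specializes to $-y_2'/y_2$, the logarithmic derivative of the holomorphic solution at $t_i$, in agreement with the Perron--Ince picture sketched in the introduction.

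The remaining step, and the principal technical obstacle, is upgrading pointwise convergence to locally uniform convergence on $U$. On a compact $K\subset U_i$ the positive continuous function $\rho_1(z)/\rho_s(z)$ is bounded above by some $\theta<1$. Cauchy's integral formula on discs of a common radius slightly smaller than $\min_K\rho_s$ produces a uniform upper bound $|y_s^{(n)}(z)|/n!\le M_s/(\rho')^n$ for $s\ge 2$. The matching uniform lower bound $|y_1^{(n)}(z)|/n!\ge c_K/|z-t_i|^n$ is the delicate point; I would extract it from the Fuchsian Frobenius expansion of $y_1$ at $t_i$, which has the form $(z-t_i)^\alpha h(z)$ (possibly modified by logarithmic factors) with $h$ holomorphic and nonvanishing at $t_i$, and whose successive derivatives admit explicit asymptotics. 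Combining the two estimates gives $|y_s^{(n)}/y_1^{(n)}|\le C\,\theta^n$ uniformly on $K$, from which uniform convergence of $q_{k,n}/q_{j,n}$ to $c_{1,k}/c_{1,j}$ on compacta avoiding the discrete zero set of $c_{1,j}$ follows.
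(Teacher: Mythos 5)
Your argument is essentially the paper's: the paper writes $Q_n=[Y,Y',\ldots,Y^{(m-1)}]^{-1}Y^{(n)}$, which is exactly your Cramer's-rule expression of each $q_{i,n}$ as an $n$-independent linear combination of the $y_s^{(n)}$, followed by the same division by $y_1^{(n)}$ and an appeal to the decay (\ref{res.tai}) established in Proposition \ref{icaperu1}. The only difference is that you explicitly confront the upgrade from pointwise to locally uniform convergence and the possible zeros of the limiting denominator, points the paper passes over in silence.
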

If Problem \ref{icaperu} is solved then the convergence in the above theorem will occur in each $U_i$. 
\begin{proof}
Let $y_1,y_2,\ldots,y_m$ be a basis of the $\C$-vector space of solutions of (\ref{lequation}) and let
$$
Y^{(i)}=[y_1^{(i)},y_2^{(i)},\cdots,y_m^{(i)}]^{\tr}, \ i=1,2,3,\ldots 
$$
$$
Q_n=[q_{0,n},q_{1,n},\ldots,q_{m-1, n}]^{\tr}
$$
We have 
$$
Y^{(n)}=[Y,Y',\cdots,Y^{(m-1)}]Q_n,
$$
Therefore
\begin{equation}
\label{4oct08}
Q_n=[Y,Y',\cdots,Y^{(m-1)}]^{-1}Y^{(n)}
\end{equation}
Now, the theorem follows from Proposition \ref{icaperu}.
%Therefore, the equalities 
%(\ref{res.tai}) and (\ref{4oct08}) imply that the projectivization of $Q_n$ converges to the projectivization of the first  column of  $[Y,Y',\cdots,Y^{(m-1)}]^{-1}$. 
\end{proof}

\begin{rem}\rm
The matrix $X:=[Y,Y',\ldots, Y^{(m-1)}]^{\tr}$ is a fundamental system of the linear differential equation $X'=AX$, where 
$$
A=\begin{pmatrix}
   0&1&0&\cdots&0\\
   0&0&1&\cdots&0\\
   \vdots&\vdots &\vdots&\cdots&\vdots\\
   q_0&q_1&q_2&\cdots&q_{m-1}\\
  \end{pmatrix}
$$
and so $\tilde X=[Y,Y',\ldots, Y^{(m-1)}]^{-1}$ is a fundamental system of $\tilde X'=(-A^{\tr})\tilde X$.
\end{rem}

\section{Second order linear differential equations}
 Consider a linear differential equation of order two
\begin{equation}
\label{lequation1}
y^{''}=q_0(z)y+q_1(z)y', q_0,q_1\in\C(z)
\end{equation}
We calculate the $n$-th derivative of $y$  
$$
y^{(n)}=q_{0,n}(z)y+q_{1,n}(z)y',\ 
q_{i,n}\in\C(z),\ i=0,1.
$$
Theorem \ref{charla3} in this case is:
\begin{theo}
\label{charla2}
 For a generic Fuchsian differential equation (\ref{lequation}), the fraction 
 $\frac{q_{0,n}}{q_{1,n}}$ converges uniformly in compact subsets of $U_i$ to
  $-\frac{y_2'}{y_2}$,
   where $y_2$ is the unique (up to multiplication by a constant) holomorphic solution of (\ref{lequation}) in $U_i$.   
 \end{theo}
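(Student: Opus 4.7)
The strategy is to specialize the matrix formula (\ref{4oct08}) to $m=2$, solve the resulting $2\times 2$ system in closed form, and then let $n\to\infty$ using the ratio limit (\ref{res.tai}) already proved inside Proposition \ref{icaperu1}. Fix a Voronoi cell $U_i$ and, in accordance with Definition \ref{genericL}, pick a basis $y_1,y_2$ of solutions of (\ref{lequation1}) so that $y_2$ extends holomorphically to $t_i$ and $y_1$ does not.

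Inverting the $2\times 2$ matrix $[Y,Y']$ (whose determinant is the Wronskian $W = y_1y_2' - y_1'y_2$) and substituting into (\ref{4oct08}) yields
\begin{equation*}
q_{0,n} = \frac{y_2'\, y_1^{(n)} - y_1'\, y_2^{(n)}}{W}, \qquad q_{1,n} = \frac{y_1\, y_2^{(n)} - y_2\, y_1^{(n)}}{W}.
\end{equation*}
On $U_i\setminus\{t_i\}$, Cauchy--Hadamard gives $|y_1^{(n)}(z)/n!|^{1/n}\to 1/|z-t_i|>0$, so $y_1^{(n)}(z)\neq 0$ for $n$ large; dividing numerator and denominator of $q_{0,n}/q_{1,n}$ by $y_1^{(n)}/W$ produces
\begin{equation*}
\frac{q_{0,n}}{q_{1,n}} = \frac{y_2' - y_1'\,(y_2^{(n)}/y_1^{(n)})}{y_1\,(y_2^{(n)}/y_1^{(n)}) - y_2}.
\end{equation*}
By (\ref{res.tai}) the inner ratio goes to zero, and the right-hand side tends to $-y_2'/y_2$, which is exactly the claimed limit.

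The main technical point that remains is upgrading this pointwise statement to uniform convergence on compact sets $K\subset U_i$. I would revisit the Cauchy--Hadamard step in Proposition \ref{icaperu1}: since $\rho_1(z)=|z-t_i|$ and $\rho_j(z)\ge \min_{\ell\neq i}|z-t_\ell|$, the quotient $\rho_1/\rho_j$ admits a uniform upper bound $\theta(K)<1$ on $K$, which promotes $|y_2^{(n)}/y_1^{(n)}|$ to uniform exponential decay in $z$, while the parallel lower bound on $|y_1^{(n)}|$ makes the factorization above uniformly valid for large $n$. Zeros of $y_2$ inside $U_i$ are harmless: both $q_{0,n}/q_{1,n}$ (for $n$ large) and $-y_2'/y_2$ acquire simple poles with matching residues there, so the uniform convergence must be read in the spherical (meromorphic) sense.
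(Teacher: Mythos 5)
Your proposal is essentially identical to the paper's proof: the authors also specialize (\ref{4oct08}) to $m=2$, write out $[Y,Y']^{-1}$ via the Wronskian, and conclude from the ratio limit (\ref{res.tai}) that $\frac{q_{0,n}}{q_{1,n}}\to-\frac{y_2'}{y_2}$, citing Theorem \ref{charla3} for the convergence itself. Your added remarks on uniformity over compacts and on the zeros of $y_2$ only make explicit what the paper leaves implicit (note that, as with the paper's own argument, (\ref{res.tai}) is established on $\C\backslash A_L$ rather than on all of $U_i$).
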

It is not hard to verify that $-\frac{q_{0,n}}{q_{1,n}}$ is equal to the continued fraction (\ref{cfr}), where $a_{i,n},\ i=0,1$ are given by (\ref{recrel}), see \S \ref{cfsection}.  
%Note that $q_{i,n}(z)$'s have  only poles in the singularities of (\ref{lequation}) but $a_{i,n}$'s may have poles and zeros different from the singularities of (\ref{lequation}). 

\begin{proof}[Proof of Theorem  \ref{charla2}]
The convergence  follows from Theorem \ref{charla3}.
For the case $m=2$ we continue the proof of Theorem \ref{charla3}. We have 
$$
[Y,Y']^{-1}=\frac{1}{y_1y_2'-y_2y_1'}\mat{y_2'}{-y_1'}{-y_2}{y_1}
$$
and so $\frac{q_{0,n}}{q_{1,n}}$ converges to $-\frac{y_2'}{y_2}$. In the next section we will see that $\frac{q_{0,n}}{q_{1,n}}$ can be written as the continued fraction (\ref{cfr}).
\end{proof}

\section{Continued fractions}
\label{cfsection}
Let us consider the differential equation (\ref{lequation})  and 
\begin{equation}
\label{recrel2}
x_n=a_{1,n}x_{n+1}+a_{2,n}x_{n+2}+\cdots+a_{m,n}x_{n+m}, 
 \text{  where } x_n=\frac{y^{(n)}}{n!},\ n=0,1,2,\ldots 
\end{equation}
The recursive relations (\ref{recrel2}) gives us a monster continued fraction as follows:
We write (\ref{recrel2}) in the following form:
$$
\frac{x_{n+1}}{x_n}=\frac{1}{a_{1,n}+a_{2,n}\frac{x_{n+2}}{x_{n+1}}+ a_{3,n}\frac{x_{n+3}}{x_{n+2}}\frac{x_{n+2}}{x_{n+1}}+  \cdots+a_{m,n} \Pi_{i=0}^{m-2} \frac{x_{n+m-i}}{x_{n+m-i-1}} }
$$
Now we take $\frac{x_1}{x_0}$ and write it in terms of $\frac{x_i}{x_{i-1}},\ i=2,\ldots,m$. In the next step we replace $\frac{x_2}{x_{1}}$, wherever it appears,  with the term given by the above formula for $n=1$. We repeat this procedure until infinity. The reader may have noticed that it is not possible to write down all these  substitutions in a A4 paper for arbitrary $m$. For $m=2$ we get the usual continued fraction and only in this case we are able to analyze its convergence 

Let us recall some notation from \cite{perron} \S 57 concerning continued fractions. Let $a_n,b_{n-1},\ n=1,2,\ldots$ be two sequences of complex numbers. We write
\begin{equation}
\frac{A_n}{B_n}=
b_0+\cfrac{a_1}{b_1+\cfrac{a_{2}}{\ddots\cfrac{\ddots}{b_{n-1}+\cfrac{a_n}{b_n}}}}
\end{equation}
and we have
$$
A_n=b_nA_{n-1}+a_nA_{n-2},\ B_n=b_nB_{n-1}+a_nB_{n-2}
$$
$$
A_{-1}=1,\ B_{-1}=0,\ A_0=b_0, \ B_0=1,\ %A_1=b_1, \ B_1=b_0b_1+a_1
$$
If there is a sequence of numbers $x_n,\ n=0,1,2\ldots,$ satisfying 
$$
x_n=b_nx_{n+1}+a_{n+1}x_{n+2},\ n=0,1,\ldots
$$
then for $n\geq 1$ we have
\begin{equation}
\label{x_0x_1}
x_0=A_{n-1}x_n+a_{n}A_{n-2}x_{n+1},
\end{equation}
$$
x_1=B_{n-1}x_n+a_{n}B_{n-2}x_{n+1},
$$

Now let us come back to our notations of linear differential equations of order two. The equalities (\ref{x_0x_1}) in our case implies that $-\frac{q_{0,n}}{q_{1,n}}$ is the continued fraction (\ref{cfr}). 

\begin{rem}\rm
After we wrote this note we became aware of  a paper by Norlund \cite{nor} which in page 447 essentially asserts a solution to Problem 1 in full generality in the case of second order Fuchsian differential equations. However his proof has nontrivial gaps. 
Let us explain one of the cases which he considers. Assuming that the indicial equation  of the second order linear differential equation $L$ at $z=t_i$ has two distinct roots $\alpha_j,\ j=1,2$, in a neighborhood of $t_i$ we have two solutions $f_j$ which are asymptotic to $y_j=(z-t_i)^{\alpha_j}$. He uses 
\begin{equation}
\label{15aug}
\lim_{n\to \infty}\frac{f_j^{(n)}(z)}{ y_j^{(n)}(z)}=1,\ \ z\in U_i
\end{equation}
and then he concludes that the limit (\ref{15aug2013}) is equal to $\lim_{n\to \infty}\frac{y_1^{(n)}(z)}{ y_2^{(n)}(z)}$. 
In general, his argument is that if $f_j$ near $t_i$ is asymptotic to $y_j$ then (\ref{15aug})
must hold. Note that for a Fuchsian differential equations we can take $y_j$ as polynomials in $\ln(z-t_i)$ and 
$(z-t_i)^{\alpha_i}$ and so calculating $y_j^{(n)}$ is easy and explicit.  It is not clear however why this kind of assertion must be true or, in other words, which kind of generic conditions we have to put on $L$ such that (\ref{15aug}) holds. For the asymptotic behavior of $f_j^{(n)}$ for growing $n$, Norlund refers to the work of Perron \cite{per1913}.

  \end{rem}

{}


\begin{thebibliography}{1}


%\bibitem{DR}
%M. Dettweiler and S. Reiter.
%\newblock  Middle convolution of Fuchsian systems and the construction of rigid differential systems.  
%\newblock {\em J. Algebra}  318,  no. 1, 1--24, 2007.

\bibitem{ince}
E. L. Ince,
\newblock  On the continued fractions connected with the hypergeometric equation.
Proc. Lond. Math. Soc. (2) 18, 236-248, 1919. 
\newblock {\em }


%\bibitem{hos005}
%H. Movasati.
%\newblock Calculation of mixed {H}odge structures, {G}auss-{M}anin connections
%  and {P}icard-{F}uchs equations.
%\newblock In {\em Real and complex singularities}, Trends Math., p.
%  247--262. Birkh\"auser, Basel, 2007.

\bibitem{perron}
O. Perron,
\newblock  Die Lehre von den Kettenbr\"uchen. Band II. Analytisch- funktionentheoretische Kettenbr\"uche. 
Stuttgart: B. G. Teubner Verlagsgesellschaft VI, 316 S. (1957). 

\bibitem{per1913}
O. Perron,
\newblock  \"Uber das verhalten von $f^{(v)}(x)$ f\"ur lim v=$\infty$ wenn $f(x)$ einer linearen homogenen Differentialgleichung gen\"ugt. 
Stzgsber. Akad. M\"unchen (math. Phys.), 1913, p. 355-382.  

\bibitem{poi}
H. Poincar\'e, 
\newblock   Sur les equations lin\'eaires aux diff\'erentielles ordinaires et aux diff\'erences finies. American Journal of Mathematics, Vol. 7, No. 3, Apr., 1885.


\bibitem{iwa91}
K. Iwasaki, H. Kimura, S. Shimomura, and M. Yoshida.
\newblock {\em From {G}auss to {P}ainlev\'e}.
\newblock Aspects of Mathematics, E16. Friedr. Vieweg \& Sohn, Braunschweig,
  1991.
\newblock A modern theory of special functions.



\bibitem{mate}
A. M\' at\'e, Paul Nevai, 
\newblock A generalization of Poincar\'e's theorem for recurrence equations
\newblock Journal of Approximation Theory, Volume 63, Issue 1,  1990. 



\bibitem{nor}
N. E. Norlund
\newblock   Vorlesungen \"uber Differenzenrechnung. Springer Verlag, 1924. 


\end{thebibliography}
\end{document}